\newcommand{\R}{{\mathbb R}}
\newcommand{\Hmm}[1]{\leavevmode{\marginpar{\tiny%
$\hbox to 0mm{\hspace*{-0.5mm}$\leftarrow$\hss}%
\vcenter{\vrule depth 0.1mm height 0.1mm width \the\marginparwidth}%
\hbox to
0mm{\hss$\rightarrow$\hspace*{-0.5mm}}$\\\relax\raggedright #1}}}
\newtheorem{theorem}{Theorem}[section]
\newtheorem{lemma}[theorem]{Lemma}
\newtheorem{corollary}[theorem]{Corollary}
\newtheorem{remark}[theorem]{Remark}
\newtheorem{proposition}[theorem]{Proposition}
\numberwithin{equation}{section}
\begin{document}

\title[Isocapacitary constants for the $p$-Laplacian on compact manifolds]{Isocapacitary constants for the $p$-Laplacian on compact manifolds}

\author{Lili Wang}
\address{Lili Wang: School of Mathematics and Statistics, Key Laboratory of Analytical Mathematics and Applications (Ministry of Education), Fujian Key Laboratory of Analytical Mathematics and Applications (FJKLAMA), Fujian Normal University, 350117 Fuzhou, China.}
\email{\href{mailto:liliwang@fjnu.edu.cn}{liliwang@fjnu.edu.cn}}

\author{Tao Wang}
\address{Tao Wang: Beijing International Center for Mathematical Research, Peking University, 100871, Beijing, China}
\email{\href{mailto:taowang25@pku.edu.cn}{taowang25@pku.edu.cn}}

\date{\today}
\subjclass[2020]{35P15, 35P30, 58J50}
\keywords{Isocapacitary constants, Sobolev constants, Steklov problem, $p$-Laplacian}

\begin{abstract}
    In this paper, we introduce Steklov and Neumann isocapacitary constants for the $p$-Laplacian on compact manifolds. These constants yield two-sided bounds for the $(p,\alpha)$-Sobolev constants, which degenerate to upper and lower bounds for the first nontrivial Steklov and Neumann eigenvalues of the $p$-Laplacian when $\alpha= 1$.
\end{abstract}

\maketitle

\section{Introduction}

Let $(M, g)$ be a $n$-dimensional compact, connected, Riemannian manifold with smooth boundary $\partial M$. The three prototypical eigenvalue problems---Dirichlet, Neumann, and Steklov---on such manifolds have discrete spectra, and quantitative eigenvalue estimates are of interest in spectral geometry.
In \cite{Cheeger1970}, Cheeger discovered a close relation between the first nontrivial eigenvalue of the Laplace-Beltrami operator on a closed manifold and the isoperimetric constant, called the Cheeger constant. Estimates of this type are called Cheeger estimates. Mathematicians have developed numerous generalizations of Cheeger estimates; we refer to \cite{AM1985, DK1986, Dodziuk1984, Escobar1999, HM2020, Jammes2015} and the references therein.

Maz'ya \cite{M1964, M2009, M1962} introduced the isocapacitary constant and used it to estimate the first eigenvalue of the  Dirichlet Laplacian. He proved that if $\Omega$ is a subdomain of a $n$-dimensional Riemannian manifold $M$ and $\lambda_1(\Omega)$ denotes the first Dirichlet eigenvalue of the Laplacian, then
\[
 \frac14\Gamma(\Omega) \leq \lambda_1(\Omega) \leq \Gamma(\Omega),
\]
where 
\[
 \Gamma(\Omega):= \inf_{F \subset \subset \Omega}\frac{\mathrm{Cap}(F, \Omega)}{\mathrm{Vol}(F)},
\]
with
\[
 \mathrm{Cap}(F, \Omega):= \inf\left\{\int_{\Omega}|\nabla u|^2 \ \mathrm{d}x: u \in C^{\infty}_0(\Omega), u \geq 1 \text{ on }F\right\}.
\]
Similar estimates hold for the first Dirichlet eigenvalue of the $p$-Laplacian, see \cite{Gri1999, Mazya2011}. For the use of isocapacitary constants to estimate eigenvalues in other settings, we refer to \cite{HMW2024, HS2025, HW2025}. Another interesting topic regarding capacity is the isocapacitary inequality, for which we refer to \cite{DGMM2021, Mukoseeva2023}.

This work is motivated by the pioneering work of V. G. Maz'ya on capacity theory, isoperimetric inequalities, and Sobolev inequalities, as well as subsequent developments of these estimates by A. Grigor'yan \cite{Gri1999}. Inspired particularly by the approach in \cite[Chapter 9]{Li2012}, we investigate the isocapacitary and Sobolev constants related to the $p$-Laplacian on the manifold $M$.

Given a measurable subset $U \subset \partial M$, we denote the area of $U$ by 
$$\mathrm{Area}(U)=\int_U 1 \ \mathrm{d}S,$$
where $\mathrm{d}S$ is the $(n-1)$-dimensional Hausdorff measure on $\partial M$ induced by the metric $g$. Then for any $p > 1$ and $\alpha>0$, we define the Steklov $(p, \alpha)$-isocapacitary constant $\Gamma_{p,\alpha}^S(M)$ via
\[
\Gamma_{p,\alpha}^S(M):=\inf\limits_{A,B\subset\partial M}\frac{\mathrm{Cap}_p(A,B,M)}{\left(\min\{\mathrm{Area}(A),\mathrm{Area}(B)\}\right)^\frac{1}{\alpha}},
\]
where the infimum is taken over all compact subsets $A, B$ of $\partial M$, and $\mathrm{Cap}_p(A, B, M)$ is the $p$-capacity between $A$ and $B$, defined by equation \eqref{equ:definition_of_capacity}. We also define the Steklov $(p, \alpha)$-Sobolev constant $\mathrm{SS}_{p, \alpha}(M)$ via $$\mathrm{SS}_{p,\alpha}(M):=\inf_{\substack{f \in C^\infty(M) \\ f \not\equiv const}}\frac{\int_M |\nabla f|^p\ \mathrm{d}V}{\left(\min\limits_{c\in\mathbb{R}}\int_{\partial M}|f-c|^{p\alpha}\ \mathrm{d}S\right)^\frac{1}{\alpha}}.$$ 
We now state our main theorem.
\begin{theorem}\label{mt-1}
    Assume that $(M,g)$ is a compact Riemannian manifold with smooth boundary $\partial M$. Then for any $\alpha\geq \frac{1}{p}$, we have  
    \[\frac{(p-1)^{p-1}}{2^\frac{1}{\alpha}p^p}\Gamma_{p,\alpha}^S(M)\leq \mathrm{SS}_{p,\alpha}(M)\leq 2^{\frac{p\alpha-1}{\alpha}}\Gamma_{p,\alpha}^S(M).\]
\end{theorem}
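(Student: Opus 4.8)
The plan is to prove the two inequalities separately, exploiting the duality between the Sobolev-type quotient (over functions) and the isocapacitary quotient (over pairs of sets via the $p$-capacity). Throughout, the essential bridge is the \emph{coarea formula}: for a smooth function $f$, writing $\Omega_t = \{f > t\}$ and using level sets of $f$ on $\partial M$, one converts integrals like $\int_M |\nabla f|^p\, \mathrm{d}V$ into integrals over $t$ of capacity-type quantities, and converts $\int_{\partial M} |f-c|^{p\alpha}\, \mathrm{d}S$ into integrals over $t$ of area quantities. The normalizing constant $c$ in the Sobolev quotient should be chosen as a median of $f|_{\partial M}$, so that both $\mathrm{Area}(\{f > c\} \cap \partial M)$ and $\mathrm{Area}(\{f < c\} \cap \partial M)$ are at most half of $\mathrm{Area}(\partial M)$; this is exactly what lets the $\min\{\mathrm{Area}(A), \mathrm{Area}(B)\}$ appearing in $\Gamma^S_{p,\alpha}$ interface with the median-based rearrangement, and it explains the factors of $2$ and $2^{1/\alpha}$ in the statement.

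For the upper bound $\mathrm{SS}_{p,\alpha}(M) \le 2^{(p\alpha-1)/\alpha}\,\Gamma^S_{p,\alpha}(M)$, I would argue that a near-optimal configuration for $\Gamma^S_{p,\alpha}$ furnishes a competitor for $\mathrm{SS}_{p,\alpha}$. Given compact sets $A, B \subset \partial M$, take a function $u$ nearly achieving $\mathrm{Cap}_p(A,B,M)$ — roughly, $u \approx 1$ on $A$, $u \approx 0$ on $B$, with $\int_M |\nabla u|^p$ close to the capacity. Feeding $f = u$ into the Sobolev quotient, the numerator is controlled by the capacity, while for the denominator one bounds $\min_c \int_{\partial M} |u - c|^{p\alpha}\, \mathrm{d}S$ from below by choosing $c$ between the values on $A$ and $B$: the contribution of $A$ and of $B$ each give a term comparable to $\mathrm{Area}(A)$ and $\mathrm{Area}(B)$, and the inequality $\min_c \int_{\partial M}|u-c|^{p\alpha} \ge 2^{1-p\alpha}\min\{\mathrm{Area}(A),\mathrm{Area}(B)\}$ (obtained by a convexity/splitting estimate on $|1|^{p\alpha}$ across the two sets) produces the factor $2^{(p\alpha-1)/\alpha}$ after raising to the power $1/\alpha$. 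Taking the infimum over $A, B$ then yields the claimed bound.

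For the lower bound $\frac{(p-1)^{p-1}}{2^{1/\alpha}p^p}\,\Gamma^S_{p,\alpha}(M) \le \mathrm{SS}_{p,\alpha}(M)$, I would start from an arbitrary smooth $f$, subtract its boundary median $c_0$, and split $M$ into the super- and sub-level regions of $f - c_0$. On each region the pair of sets $A_t = \{f - c_0 \ge t\} \cap \partial M$ (for the super-level part) together with $B = \{f - c_0 \le 0\}\cap\partial M$ gives admissible competitors for $\mathrm{Cap}_p$, so that $\Gamma^S_{p,\alpha}(M)\,\big(\mathrm{Area}(A_t)\big)^{1/\alpha} \le \mathrm{Cap}_p(A_t, B, M) \le \int_{\{f-c_0>t\}}|\nabla f|^p$-type truncation energies. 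The median choice guarantees $\mathrm{Area}(A_t) = \min\{\mathrm{Area}(A_t), \mathrm{Area}(B)\}$, removing the $\min$. One then integrates these capacity inequalities against $t^{p\alpha - 1}\,\mathrm{d}t$ and applies the coarea formula together with a Maz'ya-type layer-cake argument, using a Hölder or Young inequality to recombine the $t$-integrals; the exponent bookkeeping is where the sharp constant $(p-1)^{p-1}/p^p$ arises, exactly as in the classical Maz'ya calculation $\inf_{s}\frac{\int_0^s (\cdot)}{(\cdot)} $ that produces $\left(\frac{p-1}{p}\right)^{p-1}\frac1p$. The hypothesis $\alpha \ge 1/p$ enters here to guarantee convergence and the correct direction of the Hölder step.

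The main obstacle I anticipate is the lower bound's integral rearrangement: one must carefully pass from the pointwise capacity comparison $\mathrm{Cap}_p(A_t, B, M) \le \int_{\{f>t\}}|\nabla f|^p$ (itself requiring that a truncation and rescaling of $f - c_0$ be an admissible test function for the capacity) to a single clean inequality between $\int_M |\nabla f|^p$ and $\big(\int_{\partial M}|f-c_0|^{p\alpha}\big)^{1/\alpha}$, optimizing the distribution of ``energy budget'' across level sets to extract the sharp constant. The two-sided handling (super-level and sub-level separately, then combined through the median) is what forces the factor $2^{1/\alpha}$, and keeping the capacity admissibility and the coarea bookkeeping consistent across both halves is the delicate part.
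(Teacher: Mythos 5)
Your overall architecture is the same as the paper's: median normalization on $\partial M$, comparison of boundary level sets $A_t=\{f\ge t\}\cap\partial M$ against $B=\{f\le 0\}\cap\partial M$ for the lower bound, and testing the Sobolev quotient with a near-minimizer of $\mathrm{Cap}_p(A,B,M)$ together with $\min_{c}\bigl((1-c)^{p\alpha}+c^{p\alpha}\bigr)=2^{1-p\alpha}$ for the upper bound. The upper-bound half of your sketch is essentially the paper's proof and is fine.

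The lower bound, however, has a genuine gap at its analytic core. The step you describe as ``integrate the capacity inequalities against $t^{p\alpha-1}\,\mathrm{d}t$ and recombine by H\"older'' does not close. First, the pointwise truncation bound $\mathrm{Cap}_p(A_t,B,M)\le t^{-p}\int_{\{0<f<t\}}|\nabla f|^p\,\mathrm{d}V$ is too weak: integrating it against any power weight produces a logarithmically lossy quantity (concentrate $|\nabla f|$ near a very low level set and the resulting $t$-integral is unbounded relative to $\int_M|\nabla f|^p$). What is actually needed is the capacitary strong-type estimate
\begin{equation*}
\int_0^\infty \mathrm{Cap}_p\bigl(\mathcal{N}_f^{\ge t},\mathcal{N}_f^{\le 0},M\bigr)\,\mathrm{d}(t^p)\le\frac{p^p}{(p-1)^{p-1}}\int_M|\nabla f|^p\,\mathrm{d}V,
\end{equation*}
which the paper proves as Proposition \ref{p-cap} by re-representing $\mathrm{Cap}_p$ through the coarea functional $\psi(t)=\int_0^t\bigl(\int_{M_f^\tau}|\nabla f|^{p-1}\,\mathrm{d}S_\tau\bigr)^{-1/(p-1)}\mathrm{d}\tau$ (Lemmas \ref{l-fun} and \ref{l-cap}), inverting $\psi$, and applying the one-dimensional Hardy inequality --- that is precisely where $p^p/(p-1)^{p-1}$ comes from; your sketch names the constant but supplies no mechanism for it. Second, the exponent bookkeeping is off for $\alpha\ne 1$: the strong-type inequality is homogeneous of degree $p$ in $t$ and pairs with $\mathrm{d}(t^p)$, not $\mathrm{d}(t^{p\alpha})$, whereas $\|f_+\|^{p\alpha}_{L^{p\alpha}(\partial M)}=\int_0^\infty\mathrm{Area}(A_t)\,\mathrm{d}(t^{p\alpha})$ carries exponent $1$ on the area rather than $1/\alpha$. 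Bridging the two requires the separate layer-cake (Lorentz-type) inequality $\bigl(\int_0^\infty\mathrm{Area}(A_t)^{1/\alpha}\,\mathrm{d}(t^p)\bigr)^{\alpha}\ge\int_0^\infty\mathrm{Area}(A_t)\,\mathrm{d}(t^{p\alpha})$ for the nonincreasing function $t\mapsto\mathrm{Area}(A_t)$ (the paper's inequality \eqref{claim}); this, and not ``convergence of the H\"older step,'' is where a restriction on $\alpha$ genuinely enters, and it deserves its own proof. A minor further point: the factor $2^{1/\alpha}$ is obtained by applying the argument to whichever of $f_\pm$ carries at least half of $\|f\|^{p\alpha}_{L^{p\alpha}(\partial M)}$, since the capacitary inequality treats one sign at a time; your plan to run both halves and add them would need its own justification.
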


As an application of Theorem \ref{mt-1}, we estimate the first nontrivial eigenvalue of the $p$-Laplacian Steklov problem.

Following \cite{Provenzano2022}(see \cite{Pinasco2007} for a different variational characterization of the eigenvalue), we consider the $p$-Laplacian Steklov problem: for $p > 1$,  
\begin{equation}\label{equ:steklov_problem}
 \begin{cases}
  \Delta_p u = 0, \quad &\text{ in } M, \\
  |\nabla u|^{p-2}\frac{\partial u}{\partial \nu} = \sigma |u|^{p-2}u, &\text{ on } \partial M,
 \end{cases}
\end{equation}
where $\nu$ is the outward unit normal on the boundary $\partial M$. 
The problem is understood in the weak sense, that is, a couple $(u, \sigma) \in W^{1, p}(M) \times \R$ is a weak solution to \eqref{equ:steklov_problem} if and only if
\[
 \int_M\langle |\nabla u|^{p-2}\nabla u, \nabla \varphi\rangle \ \mathrm{d}V = \sigma\int_{\partial M}|u|^{p-2}u\varphi \ \mathrm{d}S, \quad \forall \ \varphi \in W^{1, p}(M).
\]
 We denote by $\sigma_{1, p}(M)$ the first nontrivial variational eigenvalue of the above problem. Set $\Gamma_p^S(M) = \Gamma_{p, 1}^S(M)$. Then we have the following estimate.
\begin{corollary}\label{mt-2}
    Assume that $(M,g)$ is a compact Riemannian manifold with smooth boundary $\partial M$. Then 
    $$\frac{(p-1)^{p-1}}{2p^p}\Gamma_p^S(M)\leq \sigma_{1, p}(M)\leq 2^{p-1}\Gamma_p^S(M).$$
\end{corollary}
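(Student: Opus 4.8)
The plan is to specialize Theorem~\ref{mt-1} to the case $\alpha = 1$ and then to identify the Sobolev constant $\mathrm{SS}_{p,1}(M)$ with the eigenvalue $\sigma_{1,p}(M)$. Since $p > 1$ we have $1 \geq \frac{1}{p}$, so Theorem~\ref{mt-1} applies with $\alpha = 1$. Substituting $\alpha = 1$, the outer exponents $\frac{1}{\alpha}$ and $\frac{p\alpha-1}{\alpha}$ become $1$ and $p-1$ respectively, and $\Gamma_{p,1}^S(M) = \Gamma_p^S(M)$ by definition, so the displayed double inequality of Theorem~\ref{mt-1} reads
\[
\frac{(p-1)^{p-1}}{2p^p}\,\Gamma_p^S(M) \leq \mathrm{SS}_{p,1}(M) \leq 2^{p-1}\,\Gamma_p^S(M).
\]
Thus it remains only to prove the identity $\mathrm{SS}_{p,1}(M) = \sigma_{1,p}(M)$.

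To this end I would first analyze the inner minimization over $c$ in the definition of $\mathrm{SS}_{p,1}(M)$. For a fixed $f \in C^\infty(M)$ that is nonconstant on $\partial M$, the map $c \mapsto \int_{\partial M}|f-c|^p\,\mathrm{d}S$ is strictly convex and coercive for $p > 1$, hence attains its minimum at a unique value $c^\ast = c^\ast(f)$ characterized by the vanishing of its derivative,
\[
\int_{\partial M} |f - c^\ast|^{p-2}(f - c^\ast)\,\mathrm{d}S = 0.
\]
Writing $u = f - c^\ast$ and using the translation invariance $\int_M |\nabla f|^p\,\mathrm{d}V = \int_M |\nabla u|^p\,\mathrm{d}V$, the quotient defining $\mathrm{SS}_{p,1}(M)$ is unchanged while its denominator becomes $\int_{\partial M}|u|^p\,\mathrm{d}S$, with $u$ subject to the orthogonality constraint $\int_{\partial M}|u|^{p-2}u\,\mathrm{d}S = 0$; conversely, any such $u$ is its own optimal shift. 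Therefore
\[
\mathrm{SS}_{p,1}(M) = \inf\left\{\frac{\int_M |\nabla u|^p\,\mathrm{d}V}{\int_{\partial M}|u|^p\,\mathrm{d}S}\ :\ u \in C^\infty(M),\ \int_{\partial M}|u|^{p-2}u\,\mathrm{d}S = 0,\ u|_{\partial M}\not\equiv 0\right\}.
\]

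Finally, I would invoke the variational characterization of the first nontrivial Steklov eigenvalue of the $p$-Laplacian from \cite{Provenzano2022}, namely that $\sigma_{1,p}(M)$ equals the infimum of the Rayleigh quotient $\int_M |\nabla u|^p\,\mathrm{d}V / \int_{\partial M}|u|^p\,\mathrm{d}S$ over $W^{1,p}(M)$ functions satisfying $\int_{\partial M}|u|^{p-2}u\,\mathrm{d}S = 0$. Using the density of $C^\infty(M)$ in $W^{1,p}(M)$ together with continuity of the trace operator into $L^p(\partial M)$, the smooth constrained infimum computed above agrees with the $W^{1,p}$ one, yielding $\mathrm{SS}_{p,1}(M) = \sigma_{1,p}(M)$ and hence the corollary.

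I expect the main obstacle to be precisely this last identification. For $p \neq 2$ the spectrum of the $p$-Laplacian is constructed through a nonlinear (Lusternik--Schnirelmann type) variational scheme, so one must rely on the specific characterization in \cite{Provenzano2022} to know that $\sigma_{1,p}(M)$ is exactly the constrained minimum of the Rayleigh quotient and not a higher minimax level. The passage between smooth and Sobolev functions, by contrast, is routine: the ``$\min_c$'' formulation of $\mathrm{SS}_{p,1}(M)$ is stable under $W^{1,p}$-approximation, since trace continuity gives $\min_c \int_{\partial M}|f_k - c|^p\,\mathrm{d}S \to \min_c \int_{\partial M}|u - c|^p\,\mathrm{d}S$, so no constraint need be preserved along the approximating sequence.
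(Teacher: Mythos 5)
Your proposal is correct and follows essentially the same route as the paper: specialize Theorem~\ref{mt-1} at $\alpha=1$ and identify $\mathrm{SS}_{p,1}(M)$ with $\sigma_{1,p}(M)$ by matching the ``$\min_c$'' form of the Rayleigh quotient with the constrained form $\int_{\partial M}|u|^{p-2}u\,\mathrm{d}S=0$ from the variational characterization of the Steklov eigenvalue. The only difference is that you spell out, via strict convexity of $c\mapsto\int_{\partial M}|f-c|^p\,\mathrm{d}S$ and its first-order condition, the equivalence that the paper dismisses with ``one can check.''
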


We now introduce the corresponding Neumann constant. For any $p > 1$ and $\alpha>0$, the Neumann $(p, \alpha)$-isocapacitary constant $\Gamma_{p,\alpha}^N(M)$ is defined as
\begin{equation}\label{definition-of-N-isocap}
\Gamma_{p,\alpha}^N(M):=\inf\limits_{A,B\subset M}\frac{\mathrm{Cap}_p(A,B,M)}{(\min\{\mathrm{Vol}(A),\mathrm{Vol}(B)\})^\frac{1}{\alpha}},    
\end{equation}
where the infimum is taken over all compact subsets $A, B$ of $M$. Recall that the Neumann $(p, \alpha)$-Sobolev constant $\mathrm{NS}_{p, \alpha}(M)$ is defined by $$\mathrm{NS}_{p,\alpha}(M):=\inf_{\substack{f \in C^\infty(M) \\ f \not\equiv const}}\frac{\int_M |\nabla f|^p\ \mathrm{d}V}{\left(\min\limits_{c\in\mathbb{R}}\int_{M}|f-c|^{p\alpha}\ \mathrm{d}V\right)^\frac{1}{\alpha}}.$$
We have the following theorem.
\begin{theorem}\label{mt-3}
    Assume that $(M,g)$ is a compact Riemannian manifold with smooth boundary $\partial M$. Then for any $\alpha\geq \frac{1}{p}$, we have  
    \[\frac{(p-1)^{p-1}}{2^\frac{1}{\alpha}p^p}\Gamma_{p,\alpha}^N(M)\leq \mathrm{NS}_{p,\alpha}(M)\leq 2^{\frac{p\alpha-1}{\alpha}}\Gamma_{p,\alpha}^N(M).\]
\end{theorem}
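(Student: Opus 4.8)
The plan is to prove the two inequalities separately, following verbatim the argument for Theorem~\ref{mt-1} but with the volume measure $\mathrm dV$, the functional $\mathrm{Vol}$, and compact subsets of $M$ playing the roles of the boundary measure $\mathrm dS$, the functional $\mathrm{Area}$, and compact subsets of $\partial M$. Since the capacity $\mathrm{Cap}_p(A,B,M)$ and the Dirichlet energy $\int_M|\nabla f|^p\,\mathrm dV$ are literally the same objects, only the measure against which mass is computed changes, and every step below is insensitive to that replacement.

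For the upper bound $\mathrm{NS}_{p,\alpha}(M)\le 2^{(p\alpha-1)/\alpha}\Gamma^N_{p,\alpha}(M)$ I would feed a near-optimal capacitor into the Sobolev quotient. Fix compact $A,B\subset M$ and let $u$ be an almost minimizer for $\mathrm{Cap}_p(A,B,M)$; after truncation we may assume $0\le u\le1$, $u\equiv0$ on $A$ and $u\equiv1$ on $B$, so that $\int_M|\nabla u|^p\,\mathrm dV\le \mathrm{Cap}_p(A,B,M)+\varepsilon$. Using $u$ as a test function, the only point is to bound the denominator from below. Since the integrand is nonnegative, for every $c\in\R$,
\[
\int_M|u-c|^{p\alpha}\,\mathrm dV\ \ge\ |c|^{p\alpha}\,\mathrm{Vol}(A)+|1-c|^{p\alpha}\,\mathrm{Vol}(B).
\]
Here the hypothesis $\alpha\ge 1/p$ enters: $t\mapsto t^{p\alpha}$ is convex, so $c^{p\alpha}+(1-c)^{p\alpha}\ge 2^{1-p\alpha}$ on $[0,1]$, and factoring out $\min\{\mathrm{Vol}(A),\mathrm{Vol}(B)\}$ gives $\min_{c\in\R}\big(|c|^{p\alpha}\mathrm{Vol}(A)+|1-c|^{p\alpha}\mathrm{Vol}(B)\big)\ge 2^{-(p\alpha-1)}\min\{\mathrm{Vol}(A),\mathrm{Vol}(B)\}$. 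Raising to the power $1/\alpha$, letting $\varepsilon\to0$, and taking the infimum over $A,B$ yields exactly the constant $2^{(p\alpha-1)/\alpha}$.

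For the lower bound I would reduce to a one-sided capacitary inequality. Given non-constant $f$, replace $f$ by $g:=f-c$ for a volume median $c$ of $f$, so that both $\{g\ge0\}$ and $\{g\le0\}$ have volume at least $\tfrac12\mathrm{Vol}(M)$ and $\int_M|g|^{p\alpha}\,\mathrm dV\ge \min_{c}\int_M|f-c|^{p\alpha}\,\mathrm dV$. Writing $g=g_+-g_-$ we have $\int_M|\nabla g|^p=\int_M|\nabla g_+|^p+\int_M|\nabla g_-|^p$, and one of $g_\pm$, say $g_+$, carries at least half the mass, $\int_M g_+^{p\alpha}\ge\tfrac12\int_M|g|^{p\alpha}$. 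Since $\{g_+=0\}=\{g\le0\}$ has volume $\ge\tfrac12\mathrm{Vol}(M)$, it remains to prove the capacitary inequality
\[
\int_M|\nabla h|^p\,\mathrm dV\ \ge\ \frac{(p-1)^{p-1}}{p^p}\,\Gamma^N_{p,\alpha}(M)\Big(\int_M h^{p\alpha}\,\mathrm dV\Big)^{1/\alpha}
\]
for every nonnegative $h$ whose zero set has volume $\ge\tfrac12\mathrm{Vol}(M)$; applying it to $h=g_+$ and using $\int_M g_+^{p\alpha}\ge\tfrac12\int_M|g|^{p\alpha}\ge\tfrac12\min_c\int_M|f-c|^{p\alpha}$ produces the extra factor $2^{-1/\alpha}$ and hence the constant $(p-1)^{p-1}/(2^{1/\alpha}p^p)$, as $\nabla g=\nabla f$.

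The main obstacle is this capacitary inequality with the sharp constant $\tfrac{(p-1)^{p-1}}{p^p}=\tfrac1p\big(\tfrac{p-1}{p}\big)^{p-1}$. For $h\ge0$ as above one has $\mathrm{Vol}(\{h\ge t\})\le\tfrac12\mathrm{Vol}(M)\le\mathrm{Vol}(\{h\le s\})$ whenever $0\le s<t$, so the definition \eqref{definition-of-N-isocap} of $\Gamma^N_{p,\alpha}$ gives $\mathrm{Cap}_p(\{h\le s\},\{h\ge t\},M)\ge \Gamma^N_{p,\alpha}(M)\,\mathrm{Vol}(\{h\ge t\})^{1/\alpha}$ at every level. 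I would then combine these level-by-level capacity bounds with the co-area formula through the series (monotonicity) law for the $p$-capacity of the nested condensers $\{h\le s\}\subset\{h\ge t\}$ — equivalently, by optimizing over admissible profiles $\rho(h)$. This is the step where a Hardy/Bliss-type one-dimensional optimization produces precisely $\tfrac1p\big(\tfrac{p-1}{p}\big)^{p-1}$, and where the convexity afforded by $p\alpha\ge1$ is used again to pass from the layer-cake integral $\int_0^\infty p\alpha\, t^{p\alpha-1}\mathrm{Vol}(\{h>t\})\,\mathrm dt$ to $\big(\int_M h^{p\alpha}\big)^{1/\alpha}$. Since this computation is purely measure-theoretic and identical to the one already carried out for Theorem~\ref{mt-1} with $\mathrm dS$ in place of $\mathrm dV$, the same constants propagate and the proof is complete.
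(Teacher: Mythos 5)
Your proposal is correct and follows essentially the same route as the paper, whose own proof of Theorem~\ref{mt-3} simply says ``repeat the proof of Theorem~\ref{mt-1} with $\mathrm{d}S$, $\mathrm{Area}$, $\partial M$ replaced by $\mathrm{d}V$, $\mathrm{Vol}$, $M$'': your upper bound is the paper's test-function argument with the convexity bound $|c|^{p\alpha}+|1-c|^{p\alpha}\geq 2^{1-p\alpha}$, and your lower bound (median, splitting into $g_\pm$, the level-set capacitary inequality with constant $(p-1)^{p-1}/p^p$, and the layer-cake step using $p\alpha\geq 1$) is exactly the combination of Proposition~\ref{p-cap} and the claim \eqref{claim} used in Section~\ref{sec3}. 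The ``main obstacle'' you identify is precisely Proposition~\ref{p-cap}, which the paper has already established by the Hardy-inequality argument you describe and which applies verbatim here since it involves only interior quantities.
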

As a corollary, we obtain a Maz’ya-type estimate for the first nontrivial Neumann eigenvalue of the $p$-Laplacian. For any $p>1$, the Neumann problem of the $p$-Laplacian is defined as 
\begin{equation}\label{Neumann-eigen-equation}
\begin{cases}
 -\Delta_pu=\mu|u|^{p-2}u,\quad &\text{ in } \ \ M,\\
 \frac{\partial u}{\partial \nu}=0, &\text{ on } \ \partial M.
\end{cases}    
\end{equation}
Let $\mu_{1,p}(M)$ be the first nontrivial eigenvalue of the Neumann problem \eqref{Neumann-eigen-equation}. 
Note that in case of $\partial M=\emptyset,$ the above eigenvalue problem is reduced to the $p$-Laplacian eigenvalue of a closed manifold. Setting $\Gamma_p^N(M) = \Gamma_{p,1}^N(M)$, we have the following estimate. 
\begin{corollary}\label{mt-4}
    Assume that $(M,g)$ is a compact Riemannian manifold with smooth boundary $\partial M$, then 
    \[\frac{(p-1)^{p-1}}{2p^p}\Gamma_p^N(M)\leq \mu_{1, p}(M)\leq 2^{p-1}\Gamma_p^N(M).\]
\end{corollary}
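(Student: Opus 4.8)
The plan is to deduce the corollary from Theorem \ref{mt-3} by specializing to $\alpha = 1$ and then identifying the Neumann $(p,1)$-Sobolev constant with the first nontrivial Neumann eigenvalue. Setting $\alpha = 1$ in Theorem \ref{mt-3} gives $2^{1/\alpha} = 2$ and $2^{(p\alpha-1)/\alpha} = 2^{p-1}$, and by definition $\Gamma_{p,1}^N(M) = \Gamma_p^N(M)$, so the theorem immediately yields
\[
\frac{(p-1)^{p-1}}{2p^p}\Gamma_p^N(M) \leq \mathrm{NS}_{p,1}(M) \leq 2^{p-1}\Gamma_p^N(M).
\]
It then remains only to prove the identity $\mathrm{NS}_{p,1}(M) = \mu_{1,p}(M)$; substituting this into the display above completes the proof.

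To establish $\mathrm{NS}_{p,1}(M) = \mu_{1,p}(M)$, I would use the variational (Poincaré-type) characterization of the first nontrivial Neumann eigenvalue,
\[
\mu_{1,p}(M) = \inf\left\{\frac{\int_M |\nabla f|^p\ \mathrm{d}V}{\int_M |f|^p\ \mathrm{d}V} : f \in W^{1,p}(M)\setminus\{0\},\ \int_M |f|^{p-2}f\ \mathrm{d}V = 0\right\}.
\]
The two inequalities are then proved as follows. For the bound $\mathrm{NS}_{p,1}(M) \leq \mu_{1,p}(M)$, take an eigenfunction $u$ for $\mu_{1,p}(M)$; testing the weak formulation of \eqref{Neumann-eigen-equation} against the constant function shows $\int_M |u|^{p-2}u\ \mathrm{d}V = 0$, and by convexity of $c \mapsto \int_M |u-c|^p\ \mathrm{d}V$ the first-order condition forces its minimizer to be $c = 0$, so $\min_c \int_M |u-c|^p\ \mathrm{d}V = \int_M |u|^p\ \mathrm{d}V$; using $u$ as a competitor in $\mathrm{NS}_{p,1}(M)$ gives the claim. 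For the reverse bound, given any nonconstant $f$, let $c^\ast$ minimize $\int_M |f-c|^p\ \mathrm{d}V$ and set $g = f - c^\ast$; then $\nabla g = \nabla f$ and the first-order optimality of $c^\ast$ gives $\int_M |g|^{p-2}g\ \mathrm{d}V = 0$, so $g$ is admissible in the variational characterization of $\mu_{1,p}(M)$ and
\[
\frac{\int_M |\nabla f|^p\ \mathrm{d}V}{\min_c \int_M |f-c|^p\ \mathrm{d}V} = \frac{\int_M |\nabla g|^p\ \mathrm{d}V}{\int_M |g|^p\ \mathrm{d}V} \geq \mu_{1,p}(M).
\]
Taking the infimum over $f$ yields $\mathrm{NS}_{p,1}(M) \geq \mu_{1,p}(M)$.

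The main obstacle is justifying the variational characterization of $\mu_{1,p}(M)$ itself, since for the $p$-Laplacian the constraint set $\{f : \int_M |f|^{p-2}f\ \mathrm{d}V = 0\}$ is not a linear subspace and the nonlinear minimax theory is more delicate than in the Hilbert case $p = 2$. I expect to invoke the established description of the first nontrivial variational eigenvalue from the references cited in the paper (the same one underlying the Steklov identity used in Corollary \ref{mt-2}); once this characterization is in hand, the argument above is entirely parallel to the Steklov case, with the boundary integrals replaced by volume integrals, and the routine verifications (convexity, differentiation under the integral sign, admissibility of $g$) present no difficulty.
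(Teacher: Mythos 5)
Your proposal is correct and takes essentially the same route as the paper: the paper's proof of Corollary \ref{mt-4} likewise consists of specializing Theorem \ref{mt-3} to $\alpha=1$ and invoking the Rayleigh-quotient characterization $\mu_{1,p}(M)=\mathrm{NS}_{p,1}(M)$, which it states without proof. Your verification of that identity via the first-order optimality condition in $c$ (and the observation that testing the weak equation against constants forces $\int_M|u|^{p-2}u\,\mathrm{d}V=0$) is exactly the routine check the paper leaves implicit.
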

\begin{remark}
\
\begin{enumerate}
\item In case that $M$ is a closed manifold, the estimate of above theorem yields the $p$-isocapacitary estimate for the $p$-Laplacian of a closed Riemannian manifold.  For a closed Riemannian manifold $M$,
\[
\frac{(p-1)^{p-1}}{2p^p}\Gamma_p^N(M)\leq \mu_{1, p}(M)\leq 2^{p-1}\Gamma_p^N(M),
\]

where $\Gamma_p^N(M)$ is given by \eqref{definition-of-N-isocap} with $\alpha=1$. 

\item  For a closed $n$-dimensional Riemannian manifold $M$ with $\operatorname{Ric} \geq -(n-1)$, Matei \cite{Matei2000} proved Cheeger-type estimate for the first nontrival eigenvalue $\mu_{1, p}(M)$ of the $p$-Laplacian:
\[
\left(\frac{h(M)}{p}\right)^p\leq \mu_{1,p}(M)\leq c(n,p)\left(h(M)+h(M)^p\right),
\]
where $c(n,p)$ is a constant depending on $n$ and $p$. The Cheeger constant $h(M)$ is defined by
\[
h(M):=\inf\limits_{S}\frac{\mathrm{Area}(S)}{\min\{\mathrm{Vol}(M_1), \mathrm{Vol}(M_2)\}},
\]
 where the infimum is taken over all closed hypersurfaces $S$ that divide $M$ into two open submanifolds $M_1$
and $M_2$. 
In contrast, the estimate we prove yields matching-order upper and lower bounds, and holds without the Ricci curvature lower bound required.
\end{enumerate}
\end{remark}

The paper is organized as follows. In Section \ref{sec2}, we set the notation and recall some preliminary results about $p$-capacity, the key lemmas needed later are also proved here. In Section \ref{sec3}, we discuss the Steklov cases and prove \Cref{mt-1} and Corollary \ref{mt-2}. In Section \ref{sec4}, we briefly study the Neumann cases and prove \Cref{mt-3} and Corollary \ref{mt-4}.

\section{Preliminaries}\label{sec2}

 In this section, we introduce the definition and basic properties of $p$-capacity. Our setup differs slightly from that in \cite[Chapter 2]{Mazya2011}, and we provide proofs for all discrepancies for the sake of completeness.

 Assume that $(M,g)$ is a compact Riemannian manifold with smooth boundary $\partial M$, and that $A, B\subset M$ are disjoint compact subsets. Consider the following space of some smooth functions,
$$\mathfrak{R}(A,B, M)=\left\{f\in C^{\infty}\left( M\right): \ f\geq 1\text{ on }A,\ f\leq 0\text{ on }B\right\}.$$
For any $p > 1$, the $p$-capacity between $A$ and $B$ relative to $ M$ is defined as 
\begin{equation}\label{equ:definition_of_capacity}
 \mathrm{Cap}_p(A,B, M)=\inf\limits_{u\in\mathfrak{R}(A,B, M)}\int_{M}|\nabla u|^p \ \mathrm{d}V.
\end{equation}
By convention, if $A \cap B \neq \varnothing$, we set $$\mathrm{Cap}_p(A, B, M) = +\infty;$$ if $A = \varnothing$ or $B = \varnothing$, then $$\mathrm{Cap}_p(A, B, M) = 0.$$
\begin{lemma}\label{l-1-1}
    Assume that $A,B$ are disjoint compact subsets of $M$, then 
    $$\mathrm{Cap}_p(A,B, M)=\inf\limits_{u\in\mathfrak{R}^\prime(A,B, M)}\int_{M}|\nabla u|^p \ \mathrm{d}V,$$
    where $\mathfrak{R}^\prime(A,B, M)$ is a space of smooth functions defined by
    $$\mathfrak{R}^\prime(A,B, M)=\left\{f\in C^{\infty}\left(M\right): \ \begin{matrix} f= 1\text{ in a neighborhood of }A,\\ \text{and }f=0\text{ in a neighborhood of }B, \\ 0 \leq f \leq 1.\end{matrix}\right\}.$$
\end{lemma}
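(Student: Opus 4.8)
The plan is to prove the two inequalities $\mathrm{Cap}_p(A,B,M) \leq \inf_{\mathfrak{R}'}\int |\nabla u|^p$ and the reverse separately. One direction is immediate: since $\mathfrak{R}'(A,B,M) \subset \mathfrak{R}(A,B,M)$ (any $f$ that equals $1$ near $A$ and $0$ near $B$ with $0\le f\le 1$ certainly satisfies $f\ge 1$ on $A$ and $f\le 0$ on $B$), the infimum over the smaller class $\mathfrak{R}'$ is at least the infimum over $\mathfrak{R}$, giving $\mathrm{Cap}_p(A,B,M) \leq \inf_{\mathfrak{R}'}\int_M |\nabla u|^p$. The real content is the reverse inequality: every competitor $u \in \mathfrak{R}(A,B,M)$ can be modified into a competitor in $\mathfrak{R}'(A,B,M)$ without increasing the $p$-Dirichlet energy by more than an arbitrarily small amount.

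\emph{First} I would carry out the \textbf{truncation} step. Given $u \in \mathfrak{R}(A,B,M)$, set $v = \min\{\max\{u,0\},1\}$, the truncation of $u$ to $[0,1]$. Then $0 \le v \le 1$, and $v = 1$ on $A$, $v = 0$ on $B$. Crucially, truncation does not increase the gradient: $|\nabla v| \le |\nabla u|$ pointwise (the gradient of $v$ vanishes where $u \notin (0,1)$ and equals $\nabla u$ elsewhere, a.e.), so $\int_M |\nabla v|^p \le \int_M |\nabla u|^p$. The only defect is that $v$ need not be smooth and satisfies $v=1$ \emph{on} $A$ and $v=0$ \emph{on} $B$ rather than on neighborhoods. \emph{Second} I would address the \textbf{neighborhood condition}: here I compose $v$ with a piecewise-linear cutoff of the real line. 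For small $\varepsilon \in (0,\tfrac12)$, let $\phi_\varepsilon(t) = \min\{1, \max\{0, \frac{t-\varepsilon}{1-2\varepsilon}\}\}$, which maps $[0,\varepsilon]$ to $0$, $[1-\varepsilon,1]$ to $1$, and is Lipschitz with constant $\frac{1}{1-2\varepsilon}$. Then $w_\varepsilon := \phi_\varepsilon \circ v$ equals $1$ on the open set $\{v > 1-\varepsilon\} \supset A$ and $0$ on $\{v < \varepsilon\} \supset B$, has $0 \le w_\varepsilon \le 1$, and satisfies $\int_M |\nabla w_\varepsilon|^p \le (1-2\varepsilon)^{-p}\int_M |\nabla v|^p$. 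Letting $\varepsilon \to 0$ recovers the energy of $v$ up to a factor tending to $1$.

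\emph{Third}, the remaining issue is \textbf{smoothness}, and this is where I expect the main technical obstacle to lie. The function $w_\varepsilon$ is only Lipschitz, not smooth, so it is not literally an element of $\mathfrak{R}'(A,B,M)$. I would resolve this by mollification: convolve $w_\varepsilon$ (in local coordinates, via a partition of unity, or using the heat semigroup / a Riemannian mollifier) with a smooth kernel at scale $\delta \ll \varepsilon$. Since $w_\varepsilon$ is already locally constant ($\equiv 1$ near $A$, $\equiv 0$ near $B$), for $\delta$ small enough the mollification preserves these constancy conditions on slightly shrunken neighborhoods of $A$ and $B$; it preserves the bounds $0 \le \cdot \le 1$ (convolution with a nonnegative kernel of unit mass is a weighted average); and by standard $L^p$-continuity of mollification on $W^{1,p}$, the gradient energy converges, $\int_M |\nabla(w_\varepsilon)_\delta|^p \to \int_M |\nabla w_\varepsilon|^p$ as $\delta \to 0$. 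Thus for each $\varepsilon$ I obtain, for small $\delta$, a genuine element of $\mathfrak{R}'(A,B,M)$ with energy at most $(1-2\varepsilon)^{-p}\int_M|\nabla u|^p + o(1)$. Taking first $\delta \to 0$, then $\varepsilon \to 0$, and finally the infimum over $u \in \mathfrak{R}(A,B,M)$ yields $\inf_{\mathfrak{R}'}\int_M |\nabla u|^p \le \mathrm{Cap}_p(A,B,M)$, completing the proof. The technical care needed is in performing the mollification intrinsically on the manifold while respecting the locally-constant boundary behavior; this is routine but must be stated carefully, and it is the step where the smooth-boundary and compactness hypotheses on $M$ are genuinely used.
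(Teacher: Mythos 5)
Your proof is correct and rests on the same core idea as the paper's: the easy inclusion $\mathfrak{R}'(A,B,M)\subset\mathfrak{R}(A,B,M)$ gives one inequality, and for the other you post-compose a near-optimal competitor $u\in\mathfrak{R}(A,B,M)$ with a cutoff of the real line that is $0$ near $(-\infty,0]$ and $1$ near $[1,\infty)$, paying only a multiplicative factor tending to $1$ in the energy. The difference is in execution: you choose the cutoff to be piecewise linear (preceded by a truncation to $[0,1]$, which is in fact redundant since your $\phi_\varepsilon$ is already constant outside $[\varepsilon,1-\varepsilon]$), which destroys smoothness and forces the mollification step that you correctly identify as the main technical obstacle --- density of $C^\infty(M)$ in $W^{1,p}$ on a manifold with boundary while preserving the bounds $0\le\cdot\le1$ and the local constancy near $A$ and $B$. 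The paper avoids this entirely by taking the one-dimensional cutoff to be \emph{smooth} from the start: a sequence $\lambda_m\in C^\infty(\mathbb{R})$ with $0\le\lambda_m'\le 1+m^{-1}$, $\lambda_m\equiv 0$ near $(-\infty,0]$, $\lambda_m\equiv 1$ near $[1,\infty)$, and $0\le\lambda_m\le 1$. Then $\lambda_m\circ u$ is automatically smooth (a smooth function of a smooth function), lies in $\mathfrak{R}'(A,B,M)$, and satisfies $\int_M|\nabla\lambda_m(u)|^p\,\mathrm{d}V\le(1+m^{-1})^p\int_M|\nabla u|^p\,\mathrm{d}V$ by the chain rule; letting $m\to\infty$ finishes the argument with no mollification, no partition of unity, and no use of the boundary regularity of $M$ beyond what is needed to define the capacity. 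Your argument is salvageable as written, but you should either supply the manifold-with-boundary smoothing argument in detail or, better, replace $\phi_\varepsilon$ by a smooth approximation of it at the outset, which collapses your three steps into one.
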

\begin{proof}
    Denote by
    $$\mathrm{Cap}_p^\prime(A,B, M)=\inf\limits_{u\in\mathfrak{R}^\prime(A,B, M)}\int_{M}|\nabla u|^p \ \mathrm{d}V.$$
    It is obvious that $\mathfrak{R}^\prime(A,B, M)\subset\mathfrak{R}(A,B, M)$, hence $$\mathrm{Cap}_p(A,B, M)\leq\mathrm{Cap}_p^\prime(A,B, M).$$ On the other hand, for any $\varepsilon>0$, take $f\in\mathfrak{R}(A,B, M)$ such that
    $$\int_{ M}|\nabla f|^p \ \mathrm{d}V \leq \mathrm{Cap}_p(A,B, M)+\varepsilon.$$
Let $\{\lambda_m(t)\}_{m\geq 1}$ denote a sequence of functions in $C^\infty(\mathbb{R})$ satisfying the following conditions:
\begin{enumerate}
    \item $0\leq\lambda_m^\prime(t)\leq 1+m^{-1}$;
    \item $\lambda_m(t)=0$ in a neighborhood of $(-\infty,0]$ and $\lambda_m(t)=1$ in a neighborhood of $[1,\infty)$;
    \item $0\leq\lambda_m(t)\leq 1$, $\forall \ t \in \mathbb{R}$.
\end{enumerate}
Then we have 
$\lambda_m(f(x))\in\mathfrak{R}^\prime(A,B, M)$ and 
\begin{align*}\mathrm{Cap}_p^\prime(A,B, M)\leq\int_{M}|\nabla \lambda_m(f)|^p \ \mathrm{d}V &\leq (1+m^{-1})^p\int_{ M}|\nabla f|^p \ \mathrm{d}V\\
&\leq (1+m^{-1})^p(\mathrm{Cap}_p(A,B, M)+\varepsilon).
\end{align*}
By letting $m\to\infty$ and $\epsilon\to 0$, we have
$$\mathrm{Cap}_p^\prime(A,B, M)\leq\mathrm{Cap}_p(A,B, M).$$ The proof is complete.
\end{proof}

Denote by
$$\Lambda=\left\{\lambda\in C^\infty(\mathbb{R}):\ \begin{matrix}
    \lambda\text{ is non-decreasing. } \lambda(t)=0\text{ for }t\leq 0,\\ \lambda(t)=1\text{ for }t\geq 1
    \text{ and } \mathrm{Supp}(\lambda')\subset (0,1).
\end{matrix}\right\}.$$
The following lemma comes from \cite[p. 144, Lemma 2]{Mazya2011}.
\begin{lemma}\label{l-fun}
    Let $g$ be a non-negative function that is integrable on $[0,1]$. Then
    $$\inf\limits_{\lambda\in\Lambda}\int_0^1(\lambda^\prime)^pg \ \mathrm{d}t=\left(\int_0^1\frac{\mathrm{d}t}{g^{1/(p-1)}}\right)^{1-p}.$$
\end{lemma}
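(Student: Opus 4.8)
The plan is to recast the one-sided optimization over $\Lambda$ as a constrained one-dimensional variational problem, extract the lower bound from H\"older's inequality, and match it with an explicit, suitably regularized near-minimizer. Throughout set $I:=\int_0^1 g^{-1/(p-1)}\,\mathrm{d}t\in(0,+\infty]$, so the right-hand side of the claimed identity is $I^{1-p}$, with the convention $I^{1-p}=0$ when $I=+\infty$ (since $1-p<0$). The key observation is that every $\lambda\in\Lambda$ satisfies $\lambda(0)=0$, $\lambda(1)=1$, and $\lambda'\geq 0$ with $\mathrm{Supp}(\lambda')\subset(0,1)$, whence $\int_0^1\lambda'\,\mathrm{d}t=1$. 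Writing $\phi=\lambda'$, the problem becomes: minimize $\int_0^1\phi^p g\,\mathrm{d}t$ over nonnegative smooth $\phi$ supported in $(0,1)$ with $\int_0^1\phi\,\mathrm{d}t=1$.

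For the lower bound I would split $1=\int_0^1\phi\,\mathrm{d}t=\int_0^1(\phi\,g^{1/p})\,g^{-1/p}\,\mathrm{d}t$ and apply H\"older's inequality with exponents $p$ and $\tfrac{p}{p-1}$, giving
\[
1\leq\left(\int_0^1\phi^p g\,\mathrm{d}t\right)^{1/p}\left(\int_0^1 g^{-1/(p-1)}\,\mathrm{d}t\right)^{(p-1)/p}.
\]
Raising to the $p$-th power and rearranging yields $\int_0^1\phi^p g\,\mathrm{d}t\geq I^{1-p}$ for every admissible $\phi$, hence $\inf_{\lambda\in\Lambda}\int_0^1(\lambda')^p g\,\mathrm{d}t\geq I^{1-p}$. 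This part is clean and complete. The equality case of H\"older then dictates the formal minimizer: choosing $\phi^\ast=g^{-1/(p-1)}/I$ (defined when $I<\infty$) gives $\int_0^1(\phi^\ast)^p g\,\mathrm{d}t=I^{-p}\int_0^1 g^{-p/(p-1)}g\,\mathrm{d}t=I^{-p}\cdot I=I^{1-p}$, so the corresponding $\lambda_0(t)=\tfrac1I\int_0^t g^{-1/(p-1)}\,\mathrm{d}s$ attains the lower bound \emph{exactly} in the relaxed (non-smooth) problem.

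The main obstacle is therefore the upper bound: $\lambda_0$ is generally neither smooth nor constant near the endpoints, so $\lambda_0\notin\Lambda$, and since $g$ is only assumed integrable one cannot mollify carelessly. I would regularize $\phi^\ast$ in three steps, controlling the weighted integral at each stage. First truncate the support: replace $\phi^\ast$ by its restriction to $[\eta,1-\eta]$, renormalized to integral $1$; as $\eta\to0$ the value converges to $I^{1-p}$. Next truncate the height, $\phi\mapsto\min(\phi,K)$ renormalized, which keeps the value below $I^{1-p}+\varepsilon$ (by monotone convergence as $K\to\infty$) and makes $\phi$ bounded with compact support in $(0,1)$. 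Finally mollify $\phi\ast\rho_\varepsilon$ with an even mollifier; convexity of $x\mapsto x^p$ gives $(\phi\ast\rho_\varepsilon)^p\leq\phi^p\ast\rho_\varepsilon$, so by Fubini $\int(\phi\ast\rho_\varepsilon)^p g\leq\int\phi^p(g\ast\rho_\varepsilon)$, and since $\phi^p$ is now bounded with compact support while $g\ast\rho_\varepsilon\to g$ in $L^1$, the right-hand side converges to $\int\phi^p g$. The resulting $\lambda(t)=\int_0^t(\phi\ast\rho_\varepsilon)$ lies in $\Lambda$ and makes the functional at most $I^{1-p}+2\varepsilon$, establishing the matching upper bound. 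The degenerate case $I=+\infty$ (where the target is $0$) is handled directly: on the set $\{g>1/N\}$ the function $g^{-1/(p-1)}$ is bounded, its integral $R_N$ is finite but tends to $+\infty$ by monotone convergence, and concentrating $\phi=g^{-1/(p-1)}/R_N$ there yields $\int\phi^p g=R_N^{1-p}\to0$ after the same regularization, so the infimum is $0$.
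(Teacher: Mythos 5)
Your overall architecture is sound and is essentially the standard argument behind this lemma: the paper itself gives no proof, deferring to \cite[p.~144, Lemma 2]{Mazya2011}, and Maz'ya's proof is exactly your two halves --- H\"older's inequality applied to $1=\int_0^1\lambda'\,(g^{1/p})(g^{-1/p})\,\mathrm{d}t$ for the lower bound, and a smoothed version of the extremal $\lambda_0(t)=I^{-1}\int_0^t g^{-1/(p-1)}\,\mathrm{d}s$ for the upper bound. Your three-stage regularization (truncate the support, truncate the height, mollify with an even kernel and use Jensen plus Fubini) is carefully controlled at each step and correctly lands in $\Lambda$, so the case $0<I<\infty$ is complete. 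The lower bound is also fine: when $g>0$ a.e.\ the H\"older step is exactly as you wrote it, and when $g$ vanishes on a positive-measure set the right-hand side is $0$ and the bound is trivial.

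There is one genuine, though small, gap in your treatment of the degenerate case $I=+\infty$. You assert that $R_N=\int_{\{g>1/N\}}g^{-1/(p-1)}\,\mathrm{d}t\to+\infty$ by monotone convergence, but monotone convergence only gives $R_N\uparrow\int_{\{g>0\}}g^{-1/(p-1)}\,\mathrm{d}t$, which can be finite even when $I=+\infty$: take $g=\mathbf{1}_{[1/2,1]}$, for which $I=+\infty$ (the integrand is $+\infty$ on a set of measure $1/2$) yet $R_N=1/2$ for every $N$, so your construction stalls at the value $2^{p-1}$ rather than reaching $0$. The fix is immediate: if $E=\{g=0\}$ has positive measure, concentrate $\phi$ on $E$ (e.g.\ $\phi=\mathbf{1}_E/|E|$), so that $\int_0^1\phi^p g\,\mathrm{d}t=0$, and then run your same mollification step, which gives $\int(\phi*\rho_\varepsilon)^p g\le |E|^{-p}\int_E g*\rho_\varepsilon\to 0$; if instead $g>0$ a.e., your $R_N\to+\infty$ argument is valid as written. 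With this one sub-case added, the proof is complete.
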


Following the approach of \cite[p. 144, Lemma 1]{Mazya2011}, we provide a new representation for $\mathrm{Cap}_p(A,B, M)$.
\begin{lemma}\label{l-cap}
    For any two disjoint compact subsets $A,B\subset M$, we have 
    $$\mathrm{Cap}_p(A,B, M)=\inf\limits_{u\in\mathfrak{R}(A,B, M)}\left\{\int_0^1\frac{\mathrm{d}t}{\left(\int_{M_u^t}|\nabla u|^{p-1} \ \mathrm{d}S_t\right)^{1/(p-1)}}\right\}^{1-p}$$
    where $M_u^t=\{x\in M: |u(x)|=t\}$ for any $t\in\mathbb{R}$ and $\mathrm{d}S_t$ is the $(n-1)$-dimensional Huasdorff measure on $M_u^t$ induced by $\mathrm{d}V$.
\end{lemma}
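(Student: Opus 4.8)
The plan is to prove this via the coarea formula combined with Lemma~\ref{l-fun}.The plan is to prove the two inequalities separately, using the coarea formula to rewrite the $p$-Dirichlet energy as an integral over level sets, and then Lemma~\ref{l-fun} to carry out the one-dimensional optimization. Throughout, for $u\in\mathfrak{R}(A,B,M)$ write $g_u(t):=\int_{M_u^t}|\nabla u|^{p-1}\,\mathrm{d}S_t$ and $\hat g_u(s):=\int_{\{u=s\}}|\nabla u|^{p-1}\,\mathrm{d}S_s$, and denote the claimed right-hand side by $I(u):=\left(\int_0^1 g_u(t)^{-1/(p-1)}\,\mathrm{d}t\right)^{1-p}$. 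The key analytic input is the coarea formula: for smooth $u$ and nonnegative Borel $\phi$ one has $\int_M \phi\,|\nabla u|\,\mathrm{d}V=\int_{\mathbb{R}}\big(\int_{\{u=s\}}\phi\,\mathrm{d}S_s\big)\,\mathrm{d}s$, so taking $\phi=|\nabla u|^{p-1}$ gives $\int_M|\nabla u|^p\,\mathrm{d}V=\int_{\mathbb{R}}\hat g_u(s)\,\mathrm{d}s$. By Sard's theorem the level sets $\{u=s\}$ are smooth hypersurfaces for almost every $s$, so these level-set integrals are well defined, and the critical set $\{\nabla u=0\}$ contributes nothing since $|\nabla u|^{p-1}$ vanishes there.

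For the lower bound $\mathrm{Cap}_p(A,B,M)\ge\inf_u I(u)$, I would fix $u\in\mathfrak{R}(A,B,M)$ and compare its energy with $I(u)$. Since $\{|u|=t\}=\{u=t\}\sqcup\{u=-t\}$ for $t>0$, one has $g_u(t)=\hat g_u(t)+\hat g_u(-t)$, whence $\int_0^1 g_u\,\mathrm{d}t=\int_{-1}^{1}\hat g_u\,\mathrm{d}s\le\int_{\mathbb{R}}\hat g_u\,\mathrm{d}s=\int_M|\nabla u|^p\,\mathrm{d}V$. It then remains to note the elementary inequality $\int_0^1 g_u\,\mathrm{d}t\ge I(u)$, which is Hölder's inequality with conjugate exponents $p$ and $p/(p-1)$ (equivalently, the borderline case $\lambda'\equiv 1$ of Lemma~\ref{l-fun}). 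Combining and taking the infimum over $u$ gives the claim.

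For the upper bound $\mathrm{Cap}_p(A,B,M)\le\inf_u I(u)$, I would again fix $u\in\mathfrak{R}(A,B,M)$ and test the capacity against the competitors $\lambda\circ u$ with $\lambda\in\Lambda$; these lie in $\mathfrak{R}(A,B,M)$ because $\lambda(u)=1$ on $A$ and $\lambda(u)=0$ on $B$, and they are smooth. As $\nabla(\lambda\circ u)=\lambda'(u)\nabla u$ and $\mathrm{Supp}(\lambda')\subset(0,1)$, the coarea formula yields $\int_M|\nabla(\lambda\circ u)|^p\,\mathrm{d}V=\int_0^1(\lambda'(t))^p\,\hat g_u(t)\,\mathrm{d}t$. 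Minimizing over $\lambda\in\Lambda$ via Lemma~\ref{l-fun} then gives $\mathrm{Cap}_p(A,B,M)\le\big(\int_0^1\hat g_u(t)^{-1/(p-1)}\,\mathrm{d}t\big)^{1-p}$, and since $\hat g_u\le g_u$ and $1-p<0$, this last quantity is $\le I(u)$. Taking the infimum over $u$ completes the proof.

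The step requiring the most care is the coarea reduction: one must justify that $g_u(t)$ and $\hat g_u(s)$ are well defined for almost every level and that $\int_M|\nabla u|^p\,\mathrm{d}V=\int_{\mathbb{R}}\hat g_u\,\mathrm{d}s$ holds despite the possible critical points of $u$, which is exactly where Sard's theorem and the vanishing of $|\nabla u|^{p-1}$ on $\{\nabla u=0\}$ enter. A secondary subtlety, already handled above, is the appearance of $|u|$ rather than $u$ in the definition of $M_u^t$: this introduces the extra term $\hat g_u(-t)$, which only strengthens the lower bound and is harmless for the upper bound since $\hat g_u\le g_u$.
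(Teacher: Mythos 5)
Your proposal is correct and follows essentially the same route as the paper: the coarea formula to reduce the energy of $\lambda\circ u$ to a one-dimensional integral over levels, Lemma \ref{l-fun} to optimize over $\lambda\in\Lambda$ for the upper bound, and H\"older's inequality for the lower bound. Your explicit separation of $\hat g_u$ (integral over $\{u=t\}$) from $g_u$ (integral over $\{|u|=t\}$) is a slightly more careful bookkeeping of the $\{u=-t\}$ contribution than the paper's, but it does not change the argument.
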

\begin{proof}
    For $u\in\mathfrak{R}(A,B, M)$ and any $\lambda\in\Lambda$, we have
    \begin{align*}
         \int_{ M}|\nabla \lambda(u)|^p \ \mathrm{d}V&=\int_{ M}(\lambda^\prime(u)|\nabla u|)^p \ \mathrm{d}V
         =\int_0^1 \mathrm{d}t\int_{M_u^t}(\lambda^\prime(u))^p|\nabla u|^{p-1} \ \mathrm{d}S_t\\
         &=\int_0^1 (\lambda^\prime(t))^p\left(\int_{M_u^t}|\nabla u|^{p-1} \ \mathrm{d}S_t\right) \mathrm{d}t.
    \end{align*}
    Together with Lemma \ref{l-fun}, we obtain
    \begin{align*}
     \mathrm{Cap}_p(A,B, M)&\leq\inf\limits_{\substack{u\in\mathfrak{R}(A,B, M)\\ \lambda\in\Lambda}}\int_{ M}|\nabla \lambda(u)|^p \ \mathrm{d}V \\
     &=\inf\limits_{\substack{u\in\mathfrak{R}(A,B, M)\\ \lambda\in\Lambda}}\int_0^1 (\lambda^\prime(t))^p\left(\int_{M_u^t}|\nabla u|^{p-1} \ \mathrm{d}S_t\right)\mathrm{d}t\\
     &\leq \inf\limits_{u\in\mathfrak{R}(A,B, M)}\left\{\int_0^1\frac{\mathrm{d}t}{\left(\int_{M_u^t}|\nabla u|^{p-1} \ \mathrm{d}S_t\right)^{1/(p-1)}}\right\}^{1-p}.
    \end{align*}
    On the other hand, for any $u\in\mathfrak{R}(A,B, M)$, we have
    \begin{align*}
        \int_{ M}|\nabla u|^p \ \mathrm{d}V&=\int_{0}^{\infty} \mathrm{d}t\int_{ M_u^t}|\nabla u|^{p-1} \ \mathrm{d}S_t \geq\int_0^1 \mathrm{d}t\int_{ M_u^t}|\nabla u|^{p-1} \ \mathrm{d}S_t\\
        &\geq\left\{\int_0^1\frac{\mathrm{d}t}{\left(\int_{ M_u^t}|\nabla u|^{p-1} \ \mathrm{d}S_t\right)^{1/(p-1)}}\right\}^{1-p}.
    \end{align*}
    It follows that
    $$\mathrm{Cap}_p(A,B, M)\geq\inf\limits_{u\in\mathfrak{R}(A,B, M)}\left\{\int_0^1\frac{\mathrm{d}t}{\left(\int_{ M_u^t}|\nabla u|^{p-1} \ \mathrm{d}S_t\right)^{1/(p-1)}}\right\}^{1-p}.$$
    The proof is complete.
\end{proof}

For any $t\in\mathbb{R}$ and $u\in C^\infty(M)$, set
$$ \mathcal{N}_u^{\geq t}=\{x\in M: u(x) \geq t\}\text{ and }\mathcal{N}_u^{\leq t}=\{x\in M: u(x) \leq t\}.$$
Suppose that $u \in C^{\infty}(M)$ satisfies
\[
 T=\sup\left\{t>0: \mathrm{Cap}_p\left(\mathcal{N}_u^{\geq t}, \mathcal{N}_u^{\leq 0}, M\right)>0\right\}>0.
\]
We claim that for $0<t<T$,
\[
 \psi(t):=\int_0^t \frac{\mathrm{d}\tau}{[g(\tau)]^{1/(p-1)}}<\infty,
\]
where 
\[
 g(\tau):= \int_{M_u^{\tau}} |\nabla u|^{p-1} \ \mathrm{d}S_{\tau}.
\]
Indeed, let 
 $$v(x)=\begin{cases}
     t^{-2}u(x)^2&\text{ if }u(x)\geq 0;\\
     -t^{-2}u(x)^2&\text{ if }u(x)\leq 0.
 \end{cases}$$
Since $v \in \mathfrak{R}(\mathcal{N}_u^{\geq t}, \mathcal{N}_u^{\leq 0}, M)$, by Lemma \ref{l-cap}, we have 
\[
 \int_0^1 \left(\int_{M_v^{\tau}}|\nabla v|^{p-1} \ \mathrm{d}S_{\tau}\right)^{1/(1-p)}\mathrm{d}\tau \leq [\mathrm{Cap}_p(\mathcal{N}_u^{\geq t}, \mathcal{N}_u^{\leq 0}, M)]^{1/(1-p)}<\infty.
\]
The claim follows from the fact that 
\[
 \int_0^t \frac{\mathrm{d}\tau}{[g(\tau)]^{1/(p-1)}} = \int_0^1 \left(\int_{M_v^{\tau}}|\nabla v|^{p-1} \ \mathrm{d}S_{\tau}\right)^{1/(1-p)}\mathrm{d}\tau.
\]
It follows that $g(\tau) < \infty$ for almost $0 < t < T$ and the function $\psi(t)$ is strictly monotonic. Consequently, on the interval $[0, \psi(T))$ the function $t(\psi)$, which is the inverse of $\psi(t)$, exists. Similarly to the proof of \cite[p. 153, Lemma]{Mazya2011}, we have the following lemma.
\begin{lemma}\label{l-function}
    Assume that $(M,g)$ is a compact Riemannian manifold with smooth boundary $\partial M$ and $u\in C^\infty\left(M\right)$ such that $T > 0$.
    With the same notations as above, then the function $t(\psi)$ is absolute continuous on segment $[0,\psi(T-\delta)]$ for any $0<\delta<T$, and 
    $$\int_{M}|\nabla u|^p \ \mathrm{d}V\geq\int_{0}^{\psi(T)}t^\prime(\psi)^p \ \mathrm{d}\psi.$$
\end{lemma}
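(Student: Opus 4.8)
The plan is to establish the two assertions --- absolute continuity of $t(\psi)$ and the integral inequality --- by analyzing the coarea decomposition already set up before the lemma. The key structural fact is that the defining relation $\psi(t) = \int_0^t g(\tau)^{-1/(p-1)}\,\mathrm{d}\tau$ expresses $\psi$ as an antiderivative, so by the fundamental theorem of calculus $\psi$ is differentiable with $\psi'(t) = g(t)^{-1/(p-1)}$ wherever $g$ is finite and positive, which we have argued holds for almost every $t \in (0,T)$. Since $\psi$ is strictly increasing and continuous, its inverse $t(\psi)$ exists on $[0,\psi(T))$, and formally $t'(\psi) = 1/\psi'(t) = g(t)^{1/(p-1)}$. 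First I would make this rigorous on the closed segment $[0,\psi(T-\delta)]$, where $g$ stays bounded below by a positive constant (a consequence of the finiteness of $\psi$ up to $T$ together with monotonicity), so that $\psi'$ is bounded above there; this bound on the derivative of $\psi$ translates into a Lipschitz-type control that yields the absolute continuity of the inverse function $t(\psi)$ on $[0,\psi(T-\delta)]$.

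Next I would prove the integral inequality. The natural starting point is the coarea formula, exactly as used in the proof of Lemma \ref{l-cap}:
\[
 \int_M |\nabla u|^p \ \mathrm{d}V = \int_0^\infty \left(\int_{M_u^t} |\nabla u|^{p-1} \ \mathrm{d}S_t\right)\mathrm{d}t \geq \int_0^T g(t)\,\mathrm{d}t,
\]
discarding the contributions from $t \geq T$ and from negative level sets, which only decreases the right-hand side. The task is then to show $\int_0^T g(t)\,\mathrm{d}t \geq \int_0^{\psi(T)} t'(\psi)^p\,\mathrm{d}\psi$. I would perform the change of variables $\psi = \psi(t)$ on the left. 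Using $\mathrm{d}\psi = \psi'(t)\,\mathrm{d}t = g(t)^{-1/(p-1)}\,\mathrm{d}t$ and the inverse relation $t'(\psi) = g(t)^{1/(p-1)}$, the integrand transforms as
\[
 g(t)\,\mathrm{d}t = g(t)\cdot g(t)^{1/(p-1)}\,\mathrm{d}\psi = g(t)^{p/(p-1)}\,\mathrm{d}\psi = \left(g(t)^{1/(p-1)}\right)^p \mathrm{d}\psi = t'(\psi)^p\,\mathrm{d}\psi,
\]
so the two integrals in fact agree (up to the discarded nonnegative terms), which gives the desired inequality.

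The main obstacle I expect is justifying the change of variables and the differentiation of the inverse function under the weak regularity available: $g(\tau)$ is only known to be finite for almost every $\tau$, and $\psi'(t) = g(t)^{-1/(p-1)}$ holds only almost everywhere, so $\psi$ is not a priori $C^1$. The rigorous route is to first establish absolute continuity of $t(\psi)$ on each $[0,\psi(T-\delta)]$ --- this is precisely why the lemma states that conclusion separately --- and then invoke the change-of-variables formula valid for absolutely continuous monotone maps, rather than the classical $C^1$ substitution rule. One must handle the sets where $g = 0$ or $g = \infty$ carefully, since there $\psi'$ vanishes or blows up; the monotonicity of $\psi$ together with its finiteness up to $T$ controls these exceptional sets and ensures they contribute nothing, and finally one lets $\delta \to 0$ to recover the inequality over the full range $[0,\psi(T))$. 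This technical passage from almost-everywhere derivatives to a genuine integral identity, carried out exactly as in \cite[p.\ 153, Lemma]{Mazya2011}, is the heart of the argument.
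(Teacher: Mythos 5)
The paper does not actually write out a proof of this lemma; it defers to \cite[p.~153, Lemma]{Mazya2011}. Your formal computation for the integral inequality (coarea formula plus the substitution $g(t)\,\mathrm{d}t = t'(\psi)^p\,\mathrm{d}\psi$) identifies the right identity, but your argument for the absolute continuity of $t(\psi)$ --- which you correctly single out as the load-bearing step --- does not work as stated, for two reasons. First, finiteness of $\psi(T-\delta)=\int_0^{T-\delta} g(\tau)^{-1/(p-1)}\,\mathrm{d}\tau$ does not imply that $g$ is bounded below by a positive constant on $[0,T-\delta]$: for instance $g(\tau)=\tau^{(p-1)/2}$ gives the integrable density $\tau^{-1/2}$ while $g\to 0$, and $g$ has no monotonicity to exploit. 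Second, and more seriously, even if $g\ge c>0$ held, that only makes $\psi$ Lipschitz ($\psi'\le c^{-1/(p-1)}$), and a Lipschitz bound on $\psi$ yields the \emph{lower} bound $t(\psi_2)-t(\psi_1)\ge L^{-1}(\psi_2-\psi_1)$ for the inverse --- the wrong direction for absolute continuity. A strictly increasing Lipschitz function whose derivative vanishes on a fat Cantor set has an inverse carrying a null set onto a set of positive measure, hence not absolutely continuous; to get AC of $t(\psi)$ by Lipschitz control you would need $\psi'$ bounded \emph{away from zero}, i.e.\ $g$ bounded above, which is also unavailable.

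The missing ingredient is H\"older's inequality applied to $1=g^{1/p}\cdot g^{-1/p}$: for $0\le t_1<t_2< T$,
$$t_2-t_1\le\Bigl(\int_{t_1}^{t_2}g(\tau)\,\mathrm{d}\tau\Bigr)^{1/p}\bigl(\psi(t_2)-\psi(t_1)\bigr)^{(p-1)/p}.$$
Since the coarea formula gives $\int_0^{T}g(\tau)\,\mathrm{d}\tau\le\int_M|\nabla u|^p\,\mathrm{d}V<\infty$, summing this estimate over a finite disjoint family of intervals shows that $\sum_i\bigl(t(\psi_i')-t(\psi_i)\bigr)$ is small whenever $\sum_i(\psi_i'-\psi_i)$ is, which is absolute continuity of $t(\psi)$ with no lower bound on $g$ needed. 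The same inequality, with $t_1=t(\psi)$, $t_2=t(\psi+h)$ and divided by $h^{p}$, gives $t'(\psi)^p\le\frac{\mathrm{d}}{\mathrm{d}\psi}\,G(t(\psi))$ at a.e.\ $\psi$, where $G(t)=\int_0^t g$; since $\psi\mapsto G(t(\psi))$ is nondecreasing, the integral of its a.e.\ derivative over $[0,\psi(T))$ is at most $G(T)\le\int_M|\nabla u|^p\,\mathrm{d}V$, which is the desired inequality. This replaces your change-of-variables step and disposes of the exceptional sets (where $g=0$ or $g=\infty$) without further analysis; I recommend restructuring the proof around this single estimate.
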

Now we prove the following inequality of capacity.
\begin{proposition}\label{p-cap}
    Assume that $(M,g)$ is a compact Riemannian manifold with smooth boundary $\partial M$ and $u\in C^\infty\left(M\right)$, then
    $$\int_0^{\infty} \mathrm{Cap}_p\left(\mathcal{N}_u^{\geq t}, \mathcal{N}_u^{\leq 0},  M\right) \mathrm{d}(t^p)\leq \frac{p^p}{(p-1)^{p-1}}\int_{M} |\nabla u|^p \ \mathrm{d}V.$$
\end{proposition}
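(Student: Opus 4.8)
The plan is to estimate the integrand $\mathrm{Cap}_p(\mathcal{N}_u^{\geq t},\mathcal{N}_u^{\leq 0},M)$ pointwise in $t$ and then reduce the whole inequality to a one-dimensional Hardy inequality in the variable $\psi$. First I would fix $t\in(0,T)$ and observe that the rescaled function $v_t:=u/t$ lies in $\mathfrak{R}(\mathcal{N}_u^{\geq t},\mathcal{N}_u^{\leq 0},M)$, since $v_t\geq 1$ where $u\geq t$ and $v_t\leq 0$ where $u\leq 0$. Feeding $v_t$ into the representation of \Cref{l-cap} (which, being an infimum, furnishes an upper bound for any admissible competitor) and using $M_{v_t}^{s}=M_u^{ts}$ together with $|\nabla v_t|=|\nabla u|/t$, the substitution $\tau=ts$ collapses the inner integral exactly to $\psi(t)$. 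This yields the key pointwise bound
\[
 \mathrm{Cap}_p(\mathcal{N}_u^{\geq t},\mathcal{N}_u^{\leq 0},M)\leq \psi(t)^{1-p}, \qquad 0<t<T,
\]
while for $t\geq T$ the capacity vanishes by the definition of $T$.

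Next I would integrate this bound. Writing $\mathrm{d}(t^p)=p\,t^{p-1}\,\mathrm{d}t$ and discarding the range $t\geq T$, one obtains
\[
 \int_0^\infty \mathrm{Cap}_p(\mathcal{N}_u^{\geq t},\mathcal{N}_u^{\leq 0},M)\,\mathrm{d}(t^p)\leq p\int_0^T \psi(t)^{1-p}\,t^{p-1}\,\mathrm{d}t.
\]
Since $\psi$ is strictly increasing with absolutely continuous inverse $t(\psi)$ on every $[0,\psi(T-\delta)]$ (as recorded before \Cref{l-function}), the change of variables $\psi=\psi(t)$, $\mathrm{d}t=t'(\psi)\,\mathrm{d}\psi$, converts the right-hand side into
\[
 p\int_0^{\psi(T)} \psi^{1-p}\,t(\psi)^{p-1}\,t'(\psi)\,\mathrm{d}\psi,
\]
where the passage $\delta\to0$ is justified by monotone convergence, all integrands being nonnegative.

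It then remains to prove the one-dimensional estimate
\[
 p\int_0^{L}\psi^{1-p}\phi^{p-1}\phi'\,\mathrm{d}\psi\leq \frac{p^p}{(p-1)^{p-1}}\int_0^{L}(\phi')^p\,\mathrm{d}\psi
\]
for $\phi=t(\cdot)$, $L=\psi(T)$, with $\phi(0)=0$ and $\phi'\geq 0$. I would apply Hölder's inequality with exponents $p/(p-1)$ and $p$ to bound the left-hand side by $p\,I^{(p-1)/p}J^{1/p}$, where $I=\int_0^L(\phi/\psi)^p\,\mathrm{d}\psi$ and $J=\int_0^L(\phi')^p\,\mathrm{d}\psi$, and then invoke the classical Hardy inequality $I\leq (p/(p-1))^pJ$ (valid because $\phi(\psi)=\int_0^\psi\phi'$). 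Combining the two produces precisely the factor $p\cdot (p/(p-1))^{p-1}=p^p/(p-1)^{p-1}$. Finally, \Cref{l-function} gives $\int_0^{\psi(T)}t'(\psi)^p\,\mathrm{d}\psi\leq\int_M|\nabla u|^p\,\mathrm{d}V$, and the proposition follows.

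The main obstacle I anticipate is this last step: producing the sharp constant $p^p/(p-1)^{p-1}$ rather than a crude one, which forces the specific Hölder/Hardy pairing above, and checking the integrability and boundary behaviour needed for Hardy's inequality on the finite interval $(0,L)$ (equivalently, the vanishing of the boundary term $\psi^{1-p}\phi^p$ at the origin, which follows from $\phi(\psi)\sim\phi'(0)\psi$ near $0$). The measure-theoretic care in the change of variables — absolute continuity of $t(\psi)$ and the limit $\delta\to0$ — is routine once \Cref{l-function} is in hand.
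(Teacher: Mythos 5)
Your proposal is correct and follows essentially the same route as the paper: a rescaled test function fed into the representation of \Cref{l-cap} gives the pointwise bound $\mathrm{Cap}_p(\mathcal{N}_u^{\geq t},\mathcal{N}_u^{\leq 0},M)\leq\psi(t)^{1-p}$, and the change of variables to $\psi$ followed by the H\"older/Hardy pairing and \Cref{l-function} yields the constant $p^p/(p-1)^{p-1}$ exactly as in the paper. The only (immaterial) difference is your choice of the linear competitor $u/t$ in place of the paper's quadratic rescaling $\pm t^{-2}u^2$; both collapse to the same quantity $\psi(t)$.
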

\begin{proof}
 We only need to consider the case when $T > 0$. By changing of variable, we have
 \begin{align}\label{e-1}
     \int_0^{\infty} \mathrm{Cap}_p\left(\mathcal{N}_u^{\geq t}, \mathcal{N}_u^{\leq 0}, M\right) \mathrm{d}(t^p)&=\int_0^T \mathrm{Cap}_p\left(\mathcal{N}_u^{\geq t}, \mathcal{N}_u^{\leq 0}, M\right) \mathrm{d}(t^p)\\
     &=\int_0^{\psi(T)}\mathrm{Cap}_p\left(\mathcal{N}_u^{\geq t(\psi)}, \mathcal{N}_u^{\leq 0}, M\right) \mathrm{d}(t(\psi)^p).\nonumber
 \end{align}
 For any $t>0$, set 
 $$v(x)=\begin{cases}
     t^{-2}u(x)^2&\text{ if }u(x)\geq 0;\\
     -t^{-2}u(x)^2&\text{ if }u(x)\leq 0.
 \end{cases}$$
 Then we have $v\in\mathfrak{R}\left(\mathcal{N}_u^{\geq t}, \mathcal{N}_u^{\leq 0}, M\right)$ and 
 \begin{align}\label{e-2}
   &\mathrel{\phantom{=}}\int_0^1\left(\int_{M_v^{\tau}}|\nabla v|^{p-1} \ \mathrm{d}S_{\tau}\right)^{1/(1-p)}\mathrm{d}\tau \nonumber \\
   & = \int_0^t\left(\int_{M_u^{\xi}}(2t^{-2}\xi)^{p-1}|\nabla u|^{p-1} \ \mathrm{d}S_{\xi}\right)^{1/(1-p)}2\xi t^{-2} \ \mathrm{d}\xi  \\
   & = \psi(t). \nonumber
 \end{align}
 Together with Lemma \ref{l-cap} and equation \eqref{e-2}, it follows that
 \begin{align*}
     \mathrm{Cap}_p\left(\mathcal{N}_u^{\geq t}, \mathcal{N}_u^{\leq 0}, M\right)\leq \left(\frac{1}{\psi(t)}\right)^{p-1}.
 \end{align*}
 Combining with equation \eqref{e-1}, we have 
 \begin{align}\label{e-3}
     \int_0^{\infty} \mathrm{Cap}_p\left(\mathcal{N}_u^{\geq t}, \mathcal{N}_u^{\leq 0}, M\right) \mathrm{d}(t^p) \leq p\int_0^{\psi(T)}\left(\frac{t(\psi)}{\psi}\right)^{p-1}t^\prime(\psi) \ \mathrm{d}\psi.
 \end{align}
 The Hardy inequality implies that
 \begin{align}\label{e-4}
     \int_0^{\psi(T)}\frac{t(\psi)^p}{\psi^p} \ \mathrm{d}\psi\leq \left(\frac{p}{p-1}\right)^p\int_0^{\psi(T)}t^\prime(\psi)^p \ \mathrm{d}\psi.
 \end{align}
 From \eqref{e-3}, \eqref{e-4}, Lemma \ref{l-function} and H\"older's inequality, we have 
 \begin{align*}
   \int_0^{\infty} \mathrm{Cap}_p\left(\mathcal{N}_u^{\geq t}, \mathcal{N}_u^{\leq 0}, M\right) \mathrm{d}(t^p) 
   &\leq p\int_0^{\psi(T)}\left(\frac{t(\psi)}{\psi}\right)^{p-1}t^\prime(\psi) \ \mathrm{d}\psi \\
   &\leq p\left(\int_0^{\psi(T)}\frac{t(\psi)^p}{\psi^p} \ \mathrm{d}\psi\right)^{\frac{p-1}{p}}\left(\int_0^{\psi(T)}t^\prime(\psi)^p \ \mathrm{d}\psi\right)^{\frac{1}{p}} \\
   &\leq \frac{p^p}{(p-1)^{p-1}}\int_0^{\psi(T)}t^\prime(\psi)^p \ \mathrm{d}\psi \\
   &\leq \frac{p^p}{(p-1)^{p-1}}\int_{M} |\nabla u|^p \ \mathrm{d}V,
 \end{align*}
 which completes the proof.
\end{proof}

\section{\texorpdfstring{Two-sided estimates of Steklov $(p,\alpha)$-Sobolev constants and the first eigenvalue via isocapacity}{Two-sided estimates of Steklov (p,alpha)-Sobolev constants and the first eigenvalue via isocapacity}}\label{sec3}

This section treats the Steklov case. We prove Theorem \ref{mt-1}, which establishes two-sided estimates for the $(p,\alpha)$-Sobolev constant in terms of the corresponding Steklov $(p,\alpha)$-isocapacity constants.
The bound directly yields the first nontrivial Steklov eigenvalue estimate  stated in Corollary \ref{mt-2}.

Recall that 
\[
\Gamma_{p,\alpha}^S(M):=\inf\limits_{A,B\subset\partial M}\frac{\mathrm{Cap}_p(A,B,M)}{\left(\min\{\mathrm{Area}(A),\mathrm{Area}(B)\}\right)^\frac{1}{\alpha}},
\]
and $$\mathrm{SS}_{p,\alpha}(M):=\inf_{\substack{f \in C^\infty(M) \\ f \not\equiv const}}\frac{\int_M |\nabla f|^p\ \mathrm{d}V}{\left(\min\limits_{c\in\mathbb{R}}\int_{\partial M}|f-c|^{p\alpha}\ \mathrm{d}S\right)^\frac{1}{\alpha}}.$$

\begin{proof}[Proof of Theorem \ref{mt-1}]
 We first show that 
 \[
 \frac{(p-1)^{p-1}}{2^\frac{1}{\alpha}p^p}\Gamma_{p,\alpha}^S(M)\leq \mathrm{SS}_{p,\alpha}(M). 
 \]
 For any $\varepsilon>0$, there exists $\varphi\in C^\infty(M)$ such that 
 \[
 \frac{\int_M |\nabla\varphi|^p \ \mathrm{d}V}{\left(\min\limits_{c\in\mathbb{R}}\int_{\partial M} |\varphi -c|^{p\alpha}\ \mathrm{d}S\right)^\frac{1}{\alpha}}\leq \mathrm{SS}_{p,\alpha}(M)+\varepsilon.
\]
There exists $c_1\in\mathbb{R}$ such that 
\[
\mathrm{Area}\left(\{x\in\partial M:\varphi \leq c_1\}\right)\geq \frac{1}{2}\mathrm{Area}(\partial M)
\]
and
\[
\mathrm{Area}\left(\{x\in\partial M:\varphi \geq c_1\}\right)\geq \frac{1}{2}\mathrm{Area}(\partial M).
\]
Denote $f=\varphi-c_1$. Note that 
\[
\|f\|_{L^{p\alpha}(\partial M)}^{p\alpha}=\|f_+\|_{L^{p\alpha}(\partial M )}^{p\alpha}+\|f_-\|_{L^{p\alpha}(\partial M )}^{p\alpha},
\]
where $f_+=\max\{f, 0\}$ and $f_-=\max\{-f, 0\}$. Without loss of generality, we assume that 
\[
\|f_+\|_{L^{p\alpha}(\partial M )}^{p\alpha}\geq \frac{1}{2}\|f\|_{L^{p\alpha}(\partial M)}^{p\alpha}.
\]
Then we have 
\begin{align*}
&\mathrel{\phantom{=}}\left( \mathrm{SS}_{p,\alpha}(M)+\epsilon\right)  \|f\|_{L^{p\alpha}(\partial M)}^p \\
&\geq  \int_M |\nabla f|^p\ \mathrm{d}V\\
&\geq \frac{(p-1)^{p-1}}{p^p}\int_0^\infty \mathrm{Cap}_p\left(\mathcal{N}_f^{\geq t},\mathcal{N}_f^{\leq 0}, M\right)\ \mathrm{d}(t^p)\\
&\geq \frac{(p-1)^{p-1}}{p^p}\int_0^\infty \mathrm{Cap}_p\left(\mathcal{N}_f^{\geq t}\cap\partial M, \mathcal{N}_f^{\leq 0}\cap\partial M, M\right)\ \mathrm{d}(t^p)\\
&\geq \frac{(p-1)^{p-1}}{p^p}\Gamma_{p,\alpha}^S(M)\int_0^\infty \mathrm{Area}\left(\{x\in\partial M:f\geq t\}\right)^\frac{1}{\alpha}\ \mathrm{d}(t^p).
\end{align*}
Denote 
\[
\mathcal{E}_t:=\{x\in \partial M: f\geq t\}. 
\]
We claim that 
\begin{equation}\label{claim}
\left(\int_0^\infty\mathrm{Area}(\mathcal{E}_t)^\frac{1}{\alpha}\ \mathrm{d}(t^p)\right)^\alpha\geq p\alpha \int_0^\infty t^{p\alpha-1}\mathrm{Area}(\mathcal{E}_t)\ \mathrm{d}t.    
\end{equation}
In fact, for any $s>0$, we have 
\begin{align*}
\frac{\mathrm{d}}{\mathrm{d}s}\left(\int_0^s \mathrm{Area}(\mathcal{E}_t)^\frac{1}{\alpha}\ \mathrm{d}(t^p)\right)^{\alpha}
&=\alpha\left(\int_0^s \mathrm{Area}(\mathcal{E}_t)^\frac{1}{\alpha}\ \mathrm{d}(t^p)\right)^{\alpha-1}ps^{p-1}\mathrm{Area}(\mathcal{E}_s)^\frac{1}{\alpha}\\
&\geq \alpha\left(\int_0^s \mathrm{Area}(\mathcal{E}_s)^\frac{1}{\alpha}\ \mathrm{d}(t^p)\right)^{\alpha-1}ps^{p-1}\mathrm{Area}(\mathcal{E}_s)^\frac{1}{\alpha}\\
&=\alpha s^{p(\alpha-1)}\mathrm{Area}(\mathcal{E}_s)^\frac{\alpha-1}{\alpha} ps^{p-1}\mathrm{Area}(\mathcal{E}_s)^\frac{1}{\alpha}\\
&=\alpha p s^{p\alpha-1}\mathrm{Area}(\mathcal{E}_s)
\end{align*}
and
\[
\frac{\mathrm{d}}{\mathrm{d}s}\left(\alpha\int_0^s t^{p(\alpha-1)}\mathrm{Area}(\mathcal{E}_t)pt^{p-1}\ \mathrm{d}t\right)
=p\alpha s^{p\alpha-1}\mathrm{Area}(\mathcal{E}_s).
\]
The claim follows by integrating from $0$ to $s$ and letting $s \to \infty$.

Hence, by \eqref{claim}  we obtain
\begin{align*}
&\mathrel{\phantom{=}}\left( \mathrm{SS}_{p,\alpha}(M)+\epsilon\right)  \|f\|_{L^{p\alpha}(\partial M)}^p\\
&\geq \frac{(p-1)^{p-1}}{p^p} \Gamma_{p,\alpha}^S(M)\left(\alpha p\int_0^\infty t^{p\alpha-1} \mathrm{Area}(\mathcal{E}_t)\ \mathrm{d}t\right)^\frac{1}{\alpha}\\
& =\frac{(p-1)^{p-1}}{p^p} \Gamma_{p,\alpha}^S(M)
\left(\alpha p\int_0^\infty t^{p\alpha-1} \int_{\{x\in\partial M:f(x)\geq t\}}\ \mathrm{d}S\mathrm{d}t\right)^\frac{1}{\alpha}\\
&=\frac{(p-1)^{p-1}}{p^p} \Gamma_{p,\alpha}^S(M)
\left(\int_{\{x\in\partial M:f(x)\geq 0\}}\ \mathrm{d}S\int_0^{f(x)}\alpha pt^{p\alpha-1}\ \mathrm{d}t\right)^\frac{1}{\alpha}\\
&=\frac{(p-1)^{p-1}}{p^p} \Gamma_{p,\alpha}^S(M)
\left(\int_{\{x\in\partial M:f(x)\geq 0\}}   f(x)^{p\alpha}\ \mathrm{d}S\right)^\frac{1}{\alpha}\\
&\geq \frac{(p-1)^{p-1}}{p^p} \Gamma_{p, \alpha}^S(M) \|f_+\|^p_{L^{p\alpha}(\partial M)}\\
&\geq \frac{(p-1)^{p-1}}{p^p} \Gamma_{p, \alpha}^S(M) \left(\frac{1}{2}\right)^\frac{1}{\alpha}\|f\|^p_{L^{p\alpha}(\partial M)}.
\end{align*}
We end up with 
\[
\mathrm{SS}_{p,\alpha}(M)\geq \frac{(p-1)^{p-1}}{p^p} \left(\frac{1}{2}\right)^\frac{1}{\alpha}\Gamma_{p,\alpha}^S(M)
\]
by letting $\epsilon\to 0$.

Next, we show that 
\[
\mathrm{SS}_{p,\alpha}(M)\leq 2^{\frac{p\alpha-1}{\alpha}}\Gamma_{p,\alpha}^S(M).
\]
For any $\varepsilon>0$, there exist disjoint compact subsets $A, B \subset \partial M$ such that
\[
\frac{\mathrm{Cap}_p(A, B, M)}{\min\{\mathrm{Area}(A),\mathrm{Area}(B)\}^{\frac{1}{\alpha}}} \leq \Gamma_{p, \alpha}^S(M)+\varepsilon.
\]
By Lemma \ref{l-1-1}, we see that there exists a function $g\in C^\infty(M)$ such that
\begin{equation}\label{g-def}
g\equiv 1 \ \text{ on } \ A, \ g\equiv 0 \ \text{ on } \ B    
\end{equation}
and
\begin{equation}\label{palphainequality}
\int_M|\nabla g|^p\ \mathrm{d}V\leq \mathrm{Cap}_p(A,B,M)+\varepsilon\left(\min\{\mathrm{Area}(A),\mathrm{Area}(B)\}^{\frac{1}{\alpha}}\right).
\end{equation}
Take $c_2 \in \mathbb{R}$ such that 
\[
\int_{\partial M}|g-c_2|^{p\alpha}\ \mathrm{d}S = \min_{c \in \mathbb{R}}\int_{\partial M}|g-c|^{p\alpha}\ \mathrm{d}S.
\]
Then we have
   \begin{align*}
       \mathrm{SS}_{p, \alpha}(M)&\leq\frac{\int_{ M}|\nabla (g-c_2)|^p \ \mathrm{d}V}{\left(\int_{\partial M}|g-c_2|^{p\alpha} \ \mathrm{d}S\right)^{\frac{1}{\alpha}}}\\
       &\leq\frac{\mathrm{Cap}_p(A, B, M)+\varepsilon\left(\min\{\mathrm{Area}(A), \mathrm{Area}(B)\}^{\frac{1}{\alpha}}\right)}{\left((1-c_2)^{p\alpha}\cdot \mathrm{Area}(A)+c_2^{p\alpha}\cdot \mathrm{Area}(B)\right)^{\frac{1}{\alpha}}}\\
       &\leq\frac{\mathrm{Cap}_p(A, B, M)+\varepsilon\left(\min\{\mathrm{Area}(A), \mathrm{Area}(B)\}^{\frac{1}{\alpha}}\right)}{\left[((1-c_2)^{p\alpha}+c_2^{p\alpha})\cdot (\min\{\mathrm{Area}(A), \mathrm{Area}(B)\})\right]^{\frac{1}{\alpha}}} \\
       &\leq 2^{\frac{p\alpha-1}{\alpha}}\cdot\frac{\mathrm{Cap}_p(A, B, M)+\varepsilon\left(\min\{\mathrm{Area}(A), \mathrm{Area}(B)\}^{\frac{1}{\alpha}}\right)}{\min\{\mathrm{Area}(A), \mathrm{Area}(B)\}^{\frac{1}{\alpha}}}\\
       &\leq 2^{\frac{p\alpha-1}{\alpha}}(\Gamma_{p, \alpha}^S(M)+ 2\varepsilon).
   \end{align*}
We end up with 
\[
\mathrm{SS}_{p,\alpha}(M)\leq 2^{\frac{p\alpha-1}{\alpha}}\Gamma_{p,\alpha}^S(M)
\]
by letting $\varepsilon \to 0$.
\end{proof}

At the end of this section, we establish the estimate of the first nontrivial Steklov eigenvalue $\sigma_{1, p}(M)$ of the $p$-Laplacian. By Rayleigh quotient characterization, 
\[
\sigma_{1, p}(M) = \inf \left\{ \frac{\int_{M} |\nabla u|^p \ \mathrm{d}V}{\int_{\partial M} |u|^p \ \mathrm{d}S} \ \middle|\ 
u \in C^{\infty}(M),\ \int_{\partial M} |u|^{p-2} u \ \mathrm{d}S = 0,\ u \not\equiv 0 \right\}.
\]
One can check that 
\begin{equation}\label{equ:RQC_For_Steklov}
\sigma_{1, p}(M) = \inf_{\substack{u \in C^\infty(M) \\ u \not\equiv const}}\frac{\int_M |\nabla u|^p \ \mathrm{d}V}{\min\limits_{c \in \mathbb{R}} \int_{\partial M} |u - c|^p \ \mathrm{d}S}.
\end{equation}
\begin{proof}[Proof of Corollary \ref{mt-2}]
 The conclusion follows from equation \eqref{equ:RQC_For_Steklov} and Theorem \ref{mt-1} by letting $\alpha =1$.  
\end{proof}

\section{\texorpdfstring{Two-sided estimates of Neumann $(p,\alpha)$-Sobolev constants and the first eigenvalue via isocapacity}{Two-sided estimates of Neumann (p, alpha)-Sobolev constants and the first eigenvalue via isocapacity}}\label{sec4}

In this section, we treat the Neumann case. Following the same variational scheme as in Section \ref{sec3}, we establish two-sided bounds for the $(p,\alpha)$-Sobolev constant by Neumann $(p,\alpha)$-isocapacity, and derive the first nontrivial Neumann eigenvalue estimate (Corollary \ref{mt-4}).

Recall that 
\[
\Gamma_{p,\alpha}^N(M):=\inf\limits_{A,B\subset M}\frac{\mathrm{Cap}_p(A,B,M)}{\min\{\mathrm{Vol}(A),\mathrm{Vol}(B)\}^\frac{1}{\alpha}},
\]
and $$\mathrm{NS}_{p,\alpha}(M):=\inf_{\substack{f \in C^\infty(M) \\ f \not\equiv const}}\frac{\int_M |\nabla f|^p\ \mathrm{d}V}{\left(\min\limits_{c\in\mathbb{R}}\int_{M}|f-c|^{p\alpha}\ \mathrm{d}V\right)^\frac{1}{\alpha}}.$$

\begin{proof}[Proof of \Cref{mt-3}]
 The proof is the same as that of \Cref{mt-1}, with the area integral replaced by the volume integral and $\partial M$ replaced by $M$. We omit the details here.
\end{proof}

In the end, we complete the proof of Corollary \ref{mt-4}.
\begin{proof}[Proof of Corollary \ref{mt-4}]
 By Rayleigh quotient characterization,
\[
\mu_{1, p}(M) = \inf_{\substack{u \in C^\infty(M) \\ u \not\equiv const}}\frac{\int_M |\nabla u|^p \ \mathrm{d}V}{\min\limits_{c \in \mathbb{R}} \int_{M} |u - c|^p \ \mathrm{d}V}.
\]
Then the conclusion follows from \Cref{mt-3} by letting $\alpha = 1$.
\end{proof}

\section*{Acknowledgments}
The authors thank Prof. Bobo Hua for his helpful suggestions. L. Wang is supported by NSFC, no. 12371052, and the Fujian Alliance of Mathematics, no. 2024SXLMMS01.

\bibliographystyle{plain}
\bibliography{pLaplacian}

\end{document}